\documentclass[12pt]{amsart}

\usepackage{mathtools}
\usepackage{titlesec,caption, subcaption}
\usepackage{tikz-cd}
\usepackage{amsmath,amsthm,amsfonts,amscd,amssymb,amsopn,enumerate,enumitem}
\usepackage{mathrsfs,xcolor,hyperref,stmaryrd}
\usepackage{wasysym}
\usepackage[all]{xy}
\usepackage{fullpage}
\usepackage{verbatim}
\usepackage{colonequals}
\usepackage{graphicx}
\usepackage[OT2,OT1]{fontenc}
\usepackage[new]{old-arrows}

\titleformat{name=\section}{}{\thetitle.}{0.8em}{\centering\scshape}
\titleformat{name=\subsection}[runin]{}{\thetitle.}{0.8em}{\bfseries}[.]
\titleformat{name=\subsubsection}[runin]{}{\thetitle.}{0.8em}{\bfseries}[.]

\theoremstyle{plain}
\newtheorem{thm}{Theorem}[section]
\newtheorem*{thm*}{Theorem}

\newtheorem*{conj*}{Conjecture}

\newtheorem*{fact*}{Fact}
\newtheorem*{prop*}{Proposition}

\theoremstyle{definition}

\newtheorem*{defn*}{Definition}

\newtheorem*{assu*}{Assumptions}
\newtheorem{rem}[thm]{Remark}

\newtheorem{prop}[thm]{Proposition}
\newtheorem{theorem}[thm]{Theorem}
\newtheorem{lemma}[thm]{Lemma}
\newtheorem{cor}[thm]{Corollary}
\newtheorem*{ack*}{Acknowledgements}
\theoremstyle{remark}







\title{On the modularity of elliptic curves over the cyclotomic $\mathbb{Z}_p$-extension of some real quadratic fields}
\author{Xinyao Zhang}

\begin{document}
	
	\maketitle
	\begin{abstract}	
		The modularity of elliptic curves always intrigues number theorists. Recently, Thorne had proved a marvelous result that for a prime $ p $, every elliptic curve defined over a $ p $-cyclotomic extension of $ \mathbb{Q} $ is modular. The method is to use some automorphy lifting theorems and study non-cusp points on some specific elliptic curves by Iwasawa theory for elliptic curves. Since the modularity of elliptic curves over real quadratic was proved, one may ask whether it is possible to replace $ \mathbb{Q} $ with a real quadratic field $ K $. Following Thorne's idea, we give some assumptions first and prove the modularity of elliptic curves over the $\mathbb{Z}_p$-extension of some real quadratic fields.
	\end{abstract}

\footnotetext{\textit{Key words and phrases. Modularity, Iwasawa theory for elliptic curves.}}

	\section{Introduction}
	
	Let $E_1$, $E_2$ be two elliptic curves given by the following Weierstrass equations.
	
	\begin{equation*}
		E_1: y^2+xy+y=x^3+x^2-10x-10.
	\end{equation*}
    \begin{equation*}
	E_2: y^2+xy+y=x^3+x^2-5x+2.
    \end{equation*}
    In order to state our main theorem, we give the following assumptions first.
    \begin{assu*} Let $ i=1,2 $. Let $K$ be a real quadratic field, $p$ an odd prime satisfying \\	
    	1. $ K \ne \mathbb{Q}(\sqrt{5})$,\\
    	2. The Mordell-Weil group $E_i(K)$ is finite,\\
    	3. The $p$-Tate-Shafarevich group {\fontencoding{OT2}\selectfont X}$(E_i/K)_p$ is trivial, \\
    	4. $p$ splits completely  in $K$.
    	
    \end{assu*}
    Our goal in this article is to prove the following result:
    \begin{theorem}
    	Let $ K $ and $ p $ satisfy the \textbf{Assumptions}. Let $F$ be a number field contained in the cyclotomic $\mathbb{Z}_p$-extension of $K$. Let $E$ be an elliptic curve over $ F $. Then $ E $ is modular.
    \end{theorem}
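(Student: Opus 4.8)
The plan is to follow Thorne's strategy: use modularity lifting theorems over totally real fields to reduce the modularity of an elliptic curve over $F$ to a statement about $F$-rational points on a fixed finite list of modular curves, and then control those points by Iwasawa theory applied to $E_1$ and $E_2$. The first point to notice is that $F$ is totally real. Indeed, the cyclotomic $\mathbb{Z}_p$-extension $\mathbb{Q}_\infty/\mathbb{Q}$ is totally real, because complex conjugation is the unique element of order two in $\mathrm{Gal}(\mathbb{Q}(\mu_{p^\infty})/\mathbb{Q}) \cong (\mathbb{Z}/(p-1)\mathbb{Z}) \times \mathbb{Z}_p$, hence lies in the torsion part and acts trivially on the $\mathbb{Z}_p$-quotient cutting out $\mathbb{Q}_\infty$; therefore the compositum $K_\infty = K\mathbb{Q}_\infty$ is totally real, and so is $F$. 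Moreover the only subfields of $K_\infty$ finite over $\mathbb{Q}$ are the layers $K_n$, so $F = K_n$ for some $n$ and $[F:\mathbb{Q}] = 2p^n$; since $4 \nmid 2p^n$ and $K \ne \mathbb{Q}(\sqrt{5})$ by assumption~1, it follows that $\sqrt{5} \notin F$. Both facts — total reality and the absence of $\sqrt 5$ — are needed below.

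Next I would run the mod-$3$/mod-$5$/mod-$7$ dichotomy. For $E/F$ and $\ell \in \{3,5,7\}$, if $\overline{\rho}_{E,\ell}$ is irreducible with sufficiently large image (for $\ell = 5$ this uses $\sqrt 5 \notin F$, so that $\overline{\rho}_{E,5}$ restricted to $G_{F(\mu_5)}$ is absolutely irreducible and adequate) and is residually modular — by Langlands--Tunnell for $\ell = 3$, and via the $3$--$5$ and $5$--$7$ switches for $\ell = 5, 7$ — then the modularity lifting theorems over totally real fields, as used by Freitas--Le Hung--Siksek, imply $E$ is modular. Hence if $E$ is \emph{not} modular, then $\overline{\rho}_{E,\ell}$ is exceptional for all $\ell \in \{3,5,7\}$ simultaneously, and, exactly as in the work of Thorne and of Freitas--Le Hung--Siksek, this forces $j(E)$ together with the relevant level structure to define an $F$-rational point on one of finitely many modular curves. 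Those of genus $0$ parametrise only elliptic curves that are modular for independent reasons (they have complex multiplication, or are base changed from a proper subfield, or admit an auxiliary prime $\ell$ at which lifting still applies), while the genus-one members of the list are accounted for by the curves $E_1$ and $E_2$ of the statement — these govern the case where $\overline{\rho}_{E,3}$ and $\overline{\rho}_{E,5}$ are both reducible, i.e. $E$ admits an $F$-rational $15$-isogeny. So the problem reduces to showing that every non-cuspidal $F$-point of $E_1$ and of $E_2$ yields a modular elliptic curve.

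This is where assumptions~2--4 and Iwasawa theory enter. The curves $E_1, E_2$ have conductor $15$, so at every prime of $K$ above $p$ they have good or multiplicative reduction (multiplicative when $p \in \{3,5\}$, good — and one checks ordinary — when $p \ge 7$), and as $p$ splits completely in $K$ the local picture at $p$ is the same as over $\mathbb{Q}_p$. Since $E_i(K)$ is finite (assumption~2) and the $p$-primary Tate--Shafarevich group is trivial (assumption~3), the $p^{\infty}$-Selmer group $\mathrm{Sel}(E_i/K)$ vanishes; Mazur's control theorem, in Greenberg's refined form accommodating residual $p$-torsion, then shows that the Mordell--Weil rank of $E_i$ remains $0$ throughout $K_\infty/K$ and that its torsion stays uniformly bounded, so $E_i(F) = E_i(K_n)$ is finite and contributes nothing essential beyond $E_i(K)$. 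Each non-cuspidal point of $E_i(K)$ gives, up to twist, an elliptic curve over the real quadratic field $K$, which is modular by Freitas--Le Hung--Siksek; base changing along the solvable extension $F/K$ and twisting back, the corresponding $E/F$ is modular. This contradicts the non-modularity of $E$, so every elliptic curve over $F$ is modular.

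I expect the main obstacle to be the second step: correctly and exhaustively identifying the finite list of exceptional modular curves over the large totally real field $F = K_n$, and verifying that each genus-$0$ case still forces modularity there — the delicate point being the interaction between the image of $\overline{\rho}_{E,5}$ and the requirement $\sqrt 5 \notin F$, which is precisely why assumption~1 excludes $\mathbb{Q}(\sqrt 5)$. The Iwasawa-theoretic step is comparatively soft, but it still requires checking the ordinary hypothesis for $E_1, E_2$ at the given $p$ (or replacing the ordinary Selmer groups by Kobayashi's signed Selmer groups in the supersingular case) and confirming that no unexpected torsion of $E_i$ is acquired in the cyclotomic tower.
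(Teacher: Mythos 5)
Your overall strategy is the paper's: reduce non-modularity over the totally real field $F$ (with $\sqrt5\notin F$) to $F$-rational points on $X(\mathrm{s}3,\mathrm{b}5)\cong E_2$ and $X(\mathrm{b}3,\mathrm{b}5)\cong E_1$, then use Iwasawa theory to show these two curves acquire no new points in the tower, and finish by cyclic base change from $K$. One remark on your ``main obstacle'': the classification step is not where the work lies here, because Thorne's Theorem 2 already packages the lifting theorems so that a non-modular $E/F$ must have $\bar\rho_{E,3}|_{G_{F(\zeta_3)}}$ not absolutely irreducible \emph{and} $\bar\rho_{E,5}$ reducible, which via Freitas--Le Hung--Siksek, Proposition 4.1, lands directly on these two genus-one curves; there is no residual list of genus-zero cases to dispose of.

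The genuine gaps are in the Iwasawa-theoretic step. First, deducing ``rank $0$ throughout $K_\infty/K$'' from $\operatorname{Sel}(E_i/K)_p=0$ plus the control theorem is not valid as stated: vanishing at the bottom layer only shows that the characteristic power series $f$ of the dual Selmer group is prime to $T$, and does not exclude a zero of $f$ at $\zeta-1$ for a nontrivial $p$-power root of unity $\zeta$, i.e.\ a rank jump at some finite layer $K_n$. The paper closes this by computing the $p$-adic valuation of $f(0)$ via Greenberg's formula and showing $f(0)$ is a $p$-adic unit, whence $f$ is a unit in $\Lambda$ and $\operatorname{Sel}(E_i/K_\infty)_p$ is finite; this requires the inputs you did not verify, namely $p\nmid\operatorname{Tam}(E_i/K)$ (the Tamagawa product is a power of $2$ by semistability and the $j$-invariants), $p\nmid|\tilde E_i(\mathbb{F}_p)|$ (Hasse bound against the rational subgroup of order $8$, using that $p$ splits in $K$), and in the multiplicative case ($p=3,5$) the $\mathcal{L}$-invariant factor $\ell_v$, whose numerator $\log_p(N(q_{E_i}))/\operatorname{ord}_p(N(q_{E_i}))$ lies in $p\mathbb{Z}_p^{\times}$ and cancels the $p$ in the denominator. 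Second, for the base-change argument you need $E_i(F)=E_i(K)$ \emph{exactly}, not merely that $E_i(F)$ is finite with bounded torsion; you flag ``no unexpected torsion'' as something to confirm, but it is a needed ingredient, proved in the paper from the surjectivity of $\bar\rho_{E_i,\ell}$ for odd $\ell$ (the image over $K_\infty$ has index dividing $2p^n$, incompatible with a point of order $\ell$) and from the $2$-torsion being $\mathbb{Q}$-rational (pro-$2$ image) for $\ell=2$. Two smaller slips: good reduction at $p\ge 7$ need not be ordinary --- supersingular primes occur and require Iovita--Pollack or Kobayashi, as you do mention later; and $F$ need not be a layer $K_n$, since it could be a layer $\mathbb{Q}_n$ of $\mathbb{Q}_\infty$, a case that should be delegated to Thorne's original theorem before asserting $K\subseteq F$ and $F/K$ cyclic.
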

    
    In recent years, a number of results about modularity or automorphy lifting theorems have been proved. As arithmetic consequences, the modularity of elliptic curves over some specific fields can be verified. 
    By studying non-cuspidal points on some modular curves, the following theorems had been proved:
    \begin{theorem}	[\cite{Freitas2015}, Theorem 1]
    Let $ E $ be an elliptic curve over a real quadratic field $ K $. Then $ E $
    is modular.	
     \end{theorem}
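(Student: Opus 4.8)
The plan is to establish modularity through $p$-adic automorphy lifting theorems over the totally real field $K$, reducing the problem to the residual modularity of $\bar\rho_{E,p}$ for small primes $p$, and then to sweep up the finitely many recalcitrant curves as rational points on explicit modular curves. Write $\rho_{E,p}\colon G_K \to \mathrm{GL}_2(\mathbb{Z}_p)$ for the $p$-adic Tate module representation and $\bar\rho_{E,p}$ for its reduction. Since $K$ is real quadratic it is totally real, so modularity of $E$ is equivalent to $\rho_{E,p}$ arising from a Hilbert modular eigenform over $K$. The governing dichotomy is: either some $\bar\rho_{E,p}$ with $p \in \{3,5,7\}$ is simultaneously modular and satisfies the hypotheses of an available lifting theorem, forcing $E$ to be modular, or $E$ lies in an exceptional locus that I cut out by determining the quadratic points on a short list of modular curves.

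First I would invoke the modularity lifting theorems for $\mathrm{GL}_2$ over totally real fields (Kisin, Breuil--Diamond, Barnet-Lamb--Gee--Geraghty and related work): if $\bar\rho_{E,p}$ is modular, absolutely irreducible after restriction to $G_{K(\zeta_p)}$, and its restrictions to decomposition groups above $p$ are of the allowed (ordinary or Barsotti--Tate) type, then $\rho_{E,p}$ is modular. The essential remaining input is residual modularity. For $p = 3$ this is unconditional by the Langlands--Tunnell theorem: the image of $\bar\rho_{E,3}$ lies in $\mathrm{GL}_2(\mathbb{F}_3)$, whose projective image is solvable, so every irreducible $\bar\rho_{E,3}$ is modular. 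Thus every $E$ whose mod $3$ representation is irreducible with big enough image is already handled.

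To treat the curves on which the mod $3$ step fails, I would use modularity switching (the ``$3$--$5$--$7$'' method). To show $\bar\rho_{E,5}$ is modular, I produce an auxiliary elliptic curve $E'/K$ with $\bar\rho_{E',5} \cong \bar\rho_{E,5}$ but with $\bar\rho_{E',3}$ irreducible of large image; then $E'$ is modular by the mod $3$ step, hence $\bar\rho_{E,5}$ is modular and the lifting theorem applies to $E$. Such $E'$ exists because elliptic curves with a prescribed mod $5$ representation are parametrized by a twist of the genus-zero curve $X(5)$, which has infinitely many $K$-points, and a density/Hilbert irreducibility argument lets me choose one with irreducible mod $3$ image. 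The analogous construction over the genus-zero moduli of mod $7$ level structures settles residual modularity mod $7$.

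The exceptional set consists of those $E$ for which $\bar\rho_{E,3}$, $\bar\rho_{E,5}$ and $\bar\rho_{E,7}$ are \emph{all} either reducible or of image too small for the lifting theorems (Borel, or dihedral induced from $K(\zeta_p)$). Each such condition is a modular curve, and their intersections are finitely many modular curves of low level---among them $X_0(15)$, $X_0(21)$, $X_0(35)$ and the curves recording exceptional (split/nonsplit Cartan) mod $p$ images---whose $K$-points I must find for \emph{every} real quadratic $K$. This is the crux. The genus $\le 1$ cases are done by hand; the higher-genus curves I attack with elliptic-curve Chabauty and the Mordell--Weil sieve, mapping them to elliptic curves of positive Mordell--Weil rank over $K$ and pulling back. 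The hardest part is verifying that every resulting quadratic point corresponds to an elliptic curve that is \emph{already} known to be modular---for instance a $\mathbb{Q}$-curve or a curve with complex multiplication---so that no genuinely non-modular example survives the finite computation.
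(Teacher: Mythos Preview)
The paper does not supply its own proof of this statement: Theorem~1.2 is simply quoted from \cite{Freitas2015} as an external input, and the present paper uses it only as a black box (via Corollary~3.2) to conclude that any elliptic curve over the real quadratic base $K$ is already modular. There is therefore no proof in the paper to compare your proposal against.

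For what it is worth, your sketch is a broadly faithful outline of the strategy of Freitas--Le~Hung--Siksek: modularity lifting over totally real fields, Langlands--Tunnell for $p=3$, modularity switching to $p=5$ and $p=7$, and then a finite computation of quadratic points on the remaining modular curves. One refinement: in the actual argument the small-image cases are organized around the curves $X(b3,b5)$, $X(s3,b5)$, $X(b3,b7)$, $X(s3,b7)$ (and further quotients), rather than directly around $X_0(15)$, $X_0(21)$, $X_0(35)$; the curves $X(b3,b5)$ and $X(s3,b5)$ are exactly the elliptic curves $E_1$, $E_2$ appearing in this paper (Proposition~3.3). But again, none of this is carried out in the paper under review.
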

    \begin{theorem}[\cite{Derickx2020}, Theorem 4]
    	Let $ K $ be a totally real cubic number field. Let $ E $ be an elliptic curve  over $  K $. Then $ E $ is modular.
    \end{theorem}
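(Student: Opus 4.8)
The plan is to follow the Taylor--Wiles--Kisin modularity-lifting strategy combined with the ``$3$--$5$--$7$ switching'' argument, exactly as in the real quadratic case of Theorem \ref{thm} of \cite{Freitas2015}, but now carrying the Diophantine input up to degree three. The reduction step is purely representation-theoretic and works uniformly for totally real $K$: for $p\in\{3,5,7\}$ one invokes the available modularity lifting theorems over totally real fields (Kisin, Breuil--Diamond--Jarvis, Barnet-Lamb--Gee--Geraghty, Thorne), which assert that if $E/K$ has residual representation $\bar\rho_{E,p}$ modular with $\bar\rho_{E,p}\big|_{G_{K(\zeta_p)}}$ absolutely irreducible (the remaining local hypotheses being automatic for elliptic curves at these small primes), then $E$ is modular. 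Thus it suffices to exhibit, for each $E$, some $p\in\{3,5,7\}$ for which both residual modularity and this absolute irreducibility hold.

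Residual modularity is handled prime by prime. For $p=3$ the projective image of $\bar\rho_{E,3}$ lands in $\mathrm{PGL}_2(\mathbb{F}_3)\cong S_4$, which is solvable, so Langlands--Tunnell together with cyclic base change furnishes modularity of $\bar\rho_{E,3}$ with no hypothesis on $E$. For $p=5$ and $p=7$ one exploits the rationality of the modular curves $X(5)$ and $X(7)$: given $E$ whose mod-$p$ image is large, one produces an auxiliary elliptic curve $E'/K$ with $\bar\rho_{E',p}\cong\bar\rho_{E,p}$ but with $\bar\rho_{E',3}$ absolutely irreducible; then $E'$ is modular by the $p=3$ argument, so $\bar\rho_{E,p}$ is modular, and the lifting theorem applies back to $E$. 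Consequently the method succeeds for $E$ unless, \emph{simultaneously}, $\bar\rho_{E,p}\big|_{G_{K(\zeta_p)}}$ fails to be absolutely irreducible for every $p\in\{3,5,7\}$.

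The next step is to package this exceptional condition geometrically. Each failure of absolute irreducibility forces $E$ to carry a $K$-rational cyclic isogeny or a Cartan-type level structure at the corresponding prime, so $E$ determines a $K$-point on a product or fibre product of modular curves. Running through the possible image types at $3$, $5$ and $7$ reduces the exceptional set to a short, explicit list of modular curves --- among them $X_0(15)$, $X_0(21)$, $X_0(35)$ together with the relevant $X_s$, $X_{ns}$ and their fibre products --- whose totally real cubic points parametrise every potentially non-modular $E$. One then determines all points of degree at most $3$ on each of these curves over $\mathbb{Q}$ and checks that each resulting $E$ is modular for an independent reason: it is a base change from $\mathbb{Q}$ (hence modular by Wiles--Breuil--Conrad--Diamond--Taylor), or it has complex multiplication, or it is isogenous to such a curve.

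The main obstacle is precisely this last, Diophantine, step. In the quadratic setting of \cite{Freitas2015} the exceptional modular curves have essentially no unexpected quadratic points, but over cubic fields the higher degree produces genuinely more points, so the finiteness guaranteed by Faltings must be made effective by heavier machinery: explicit Mordell--Weil group computations, the Mordell--Weil sieve, and symmetric-power Chabauty applied to the symmetric cube $\operatorname{Sym}^3 X$, supplemented by class-group and Hecke-operator constraints to eliminate sporadic cubic points. Verifying that every degree-$\le 3$ point on the full list corresponds to a curve already known to be modular --- and in particular that no new non-modular cubic point survives --- is the technical heart of the argument and the step demanding the most care.
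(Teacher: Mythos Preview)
The paper does not contain a proof of this statement: it is quoted verbatim as Theorem~4 of \cite{Derickx2020} and used only as contextual background in the introduction, alongside the real quadratic result of \cite{Freitas2015} and Thorne's cyclotomic theorem. There is therefore no ``paper's own proof'' to compare your proposal against.

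That said, your outline is a faithful high-level summary of the strategy actually carried out by Derickx, Najman, and Siksek in the cited reference: reduce via modularity lifting and the $3$--$5$--$7$ switch to a finite list of modular curves, then control their cubic points by symmetric-cube Chabauty and Mordell--Weil sieving. One small correction: for $p=7$ the relevant twist of $X(7)$ is not rational but is the Klein quartic (genus $3$), so the auxiliary-curve step there is more delicate than for $p=5$ and uses a Brauer-obstruction argument rather than straightforward rationality; you may want to adjust the wording accordingly if you are writing this up as a proof sketch.
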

     For more discussions, one can see \cite{Hung2013}.
     
    Using Iwasawa theory for elliptic curves, Thorne proved the following remarkable result:
    \begin{theorem}[\cite{Thorne2019}, Theorem 1]
    	Let $ p $ be a prime, and let $ F $ be a number field which is contained in the cyclotomic $\mathbb{Z}_p$-extension
    	of $ \mathbb{Q} $. Let $ E $ be an elliptic curve over $ F $. Then $ E $ is modular.
    \end{theorem}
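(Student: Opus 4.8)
The plan is to reduce the assertion to modularity over the finite layers of the tower and then to combine modern automorphy lifting theorems over totally real fields with an Iwasawa-theoretic control of a short, explicit list of exceptional modular curves. First I would note that any number field $F \subset \Q_\infty$ is contained in some finite layer $\Q_n$, and since $\mathrm{Gal}(\Q_n/\Q)$ is cyclic of $p$-power order, $F$ is itself a layer: a cyclic, totally real extension of $\Q$ of $p$-power degree that is ramified only at $p$. Thus it suffices to prove modularity of $E/F$ for such $F$. The two structural features of $F$ that the argument will exploit are that $F$ is totally real (so the Galois representations attached to $E$ are odd and the machinery of Hilbert modular forms applies) and that $F$ lies in the cyclotomic tower (so the finiteness theorems of Iwasawa theory are available).

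Next I would invoke the residual-modularity-plus-lifting dichotomy for a prime $\ell \in \{3,5,7\}$. For an elliptic curve $E/F$, the representation $\bar\rho_{E,\ell} \colon G_F \to \mathrm{GL}_2(\F_\ell)$ is odd, de Rham, and has Hodge-Tate weights $\{0,1\}$, so the strong automorphy lifting theorems available over totally real fields (those permitting small $\ell$ via adequacy results) conclude that $E$ is modular as soon as (i) $\bar\rho_{E,\ell}$ is modular and (ii) $\bar\rho_{E,\ell}|_{G_{F(\zeta_\ell)}}$ has adequate image. For (i) with $\ell = 3$ I would use the Langlands-Tunnell theorem: the image in $\mathrm{GL}_2(\F_3)$ is solvable, so the associated two-dimensional representation is automorphic, and after the standard weight-raising and base-change arguments one obtains residual modularity in parallel weight $2$. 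This already settles every $E$ for which $\bar\rho_{E,3}$ has adequate image.

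For the remaining curves I would run the $3$--$5$--$7$ switching argument. If $\bar\rho_{E,3}$ fails the image hypothesis, I would seek an auxiliary elliptic curve $E'/F$ with $\bar\rho_{E',5} \cong \bar\rho_{E,5}$ and $\bar\rho_{E',3}$ of adequate image; such $E'$ are parametrized by $F$-points of a twist of $X(5)$, a form of $\mathbb{P}^1$ carrying a rational point, so they exist in abundance, and a Hilbert-irreducibility argument selects one with $\bar\rho_{E',3}$ surjective. Then $E'$ is modular by the previous step, whence $\bar\rho_{E,5} \cong \bar\rho_{E',5}$ is modular and the $\ell = 5$ lifting theorem makes $E$ modular; a symmetric use of $\ell = 7$ absorbs further cases. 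The switching breaks down only for curves whose mod-$3$, mod-$5$ and mod-$7$ images are simultaneously inadequate, and these are exactly the $F$-rational points of a finite, explicit list of modular curves $X_i$.

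The main obstacle---and the precise place Iwasawa theory is indispensable---is to control the $\Q_\infty$-points of the exceptional $X_i$. The genus-$0$ members are handled by hand. The crux is the genus-$1$ members: each is an elliptic curve $\mathcal{E}_i/\Q$, and I must rule out "new" $F$-points appearing as $F$ climbs the tower, apart from those corresponding to CM elliptic curves (which are already modular) or to cusps. For this I would combine Kato's theorem, that the $p$-adic Selmer group of $\mathcal{E}_i$ over $\Q_\infty$ is $\Lambda$-cotorsion, with Rohrlich's nonvanishing of the twisted $L$-values along the cyclotomic tower and Mazur's control theorem, to deduce that the Mordell-Weil rank does not grow, i.e. $\mathrm{rank}\,\mathcal{E}_i(\Q_n) = \mathrm{rank}\,\mathcal{E}_i(\Q)$ for all $n$. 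Since the curves on the exceptional list have rank $0$ over $\Q$, each $\mathcal{E}_i(\Q_\infty)$ is finite, its points are the known torsion, and every corresponding $E$ is CM or cuspidal, hence modular. Assembling the three regimes yields modularity of every elliptic curve over $F$, and therefore over all of $\Q_\infty$.
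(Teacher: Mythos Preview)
The paper does not give its own proof of this statement; it is quoted (Theorem~1.4) from Thorne's paper and used as motivation. What the paper \emph{does} prove is Theorem~1.1, and that proof is explicitly modeled on Thorne's, so one can read off the intended shape of the argument from Section~3 and compare it with your sketch.

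Your outline is in the right spirit---reduce to a modularity-lifting criterion and then control the finitely many exceptional modular curves along the cyclotomic tower by Iwasawa theory---but it diverges from Thorne's actual route in two respects. First, Thorne does not run a $3$--$5$--$7$ switch. He invokes the single criterion recorded in the paper as Theorem~3.1: $E/F$ is modular provided $\bar\rho_{E,3}|_{G_{F(\zeta_3)}}$ is absolutely irreducible, or $\sqrt{5}\notin F$ and $\bar\rho_{E,5}$ is irreducible. Since $\sqrt{5}\notin\Q_\infty$ for every $p$, any non-modular $E/F$ therefore gives an $F$-point on $X(b3,b5)$ or $X(s3,b5)$. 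By Proposition~3.3 these two curves are exactly the elliptic curves $E_1,E_2$ of conductor $15$ with $E_i(\Q)\cong\Z/2\Z\times\Z/4\Z$. There are no genus-$0$ members and no unspecified list; the exceptional locus is two explicit rank-$0$ elliptic curves over $\Q$. Your appeal to the prime $7$ would enlarge the exceptional list and introduce higher-genus curves whose $\Q_\infty$-points are not accessible by the Iwasawa methods you describe.

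Second, the Iwasawa step in the paper (mirroring Thorne) is more explicit than the Kato--Rohrlich--control package you propose. One shows $E_i(\Q_\infty)$ is finite by evaluating the characteristic element $f_{E_i}$ at $T=0$ via Greenberg's formulae in the good-ordinary and multiplicative cases (checking that Tamagawa factors, reduction orders, and the $\mathcal{L}$-invariant contribute only $p$-adic units), and by invoking the supersingular results of Iovita--Pollack when $a_p=0$. Together with the torsion computations (Lemmas~2.2 and~3.5) this gives $E_i(\Q)=E_i(\Q_\infty)$; cyclic base change from $\Q$ then finishes. Your high-level plan would work, but the argument the paper follows is both sharper in its exceptional list and more hands-on in its Iwasawa input.
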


	To prove Theorem 1.1, our strategy is to extend Thorne's method, but there are some obstrucions to study the $ K_n $-points of $ E_i $, where $ K_n $ is the $ p $-cyclotomic extension of a real quadratic field $ K $ satisfying $ [K_n:K]=p^n $. In Thorne's proof, a key point is the fact that the Selmer group  $ \operatorname{Sel}(E_i/ \mathbb{Q} )_p $ is trivial and then one can use some results in Iwasawa theory. This is the reason why we first assume that the Mordell-Weil group $E_i(K)$ is finite and the $p$-Tate-Shafarevich group {\fontencoding{OT2}\selectfont X}$(E_i/K)_p$ is trivial. Under these assumptions, we use some more general results in Iwasawa theory for elliptic curves to prove our result.

	\section{Some data}
	
	In this section, we will give some data of $ E_i $, most of which are from \cite{LMFDBCollaboration2022}.\\
	
	\begin{center}	
	 
	\begin{tabular}{|rrr|}

		\hline
		Information & $ E_1 $ & $E_2$\\
		\hline
		Mordell-Weil group & $\mathbb{Z}/{2}\mathbb{Z} \times \mathbb{Z}/{4}\mathbb{Z}$ & $\mathbb{Z}/{2}\mathbb{Z} \times \mathbb{Z}/{4}\mathbb{Z}$\\
		Conductor & $ 15 $ & $ 15 $\\
		$ j $-invariant &$\frac{111284641}{50625} =3^{-4} \cdot 5^{-4} \cdot 13^{3} \cdot 37^{3}$ &$\frac{13997521}{225}= 3^{-2} \cdot 5^{-2} \cdot 241^{3} $\\
		Reduction type & semistable & semistable\\
		~& split multiplicative: $ 5 $ & split multiplicative: $ 5 $ \\
		~& non-split multiplicative: $ 3 $ & non-split multiplicative: $ 3 $ \\
		Tamagawa product & $ 8 $ & $ 4 $ \\
		Analytic order of {\fontencoding{OT2}\selectfont X} & $ 1 $ &$ 1 $\\
		Growth of $ 2^{\infty} $-torsion & $\mathbb{Q}(\sqrt{5}): \mathbb{Z}/{2}\mathbb{Z} \times \mathbb{Z}/{8}\mathbb{Z}$& $\mathbb{Q}(\sqrt{5}): \mathbb{Z}/{2}\mathbb{Z} \times \mathbb{Z}/{8}\mathbb{Z}$ \\
		~ & $\mathbb{Q}(i): \mathbb{Z}/{4}\mathbb{Z} \times \mathbb{Z}/{4}\mathbb{Z}$ & $\mathbb{Q}(i,\sqrt{15}): \mathbb{Z}/{4}\mathbb{Z} \times \mathbb{Z}/{4}\mathbb{Z}$\\
		\hline
		\end{tabular}
		
\end{center}
\begin{center}
	\textbf{Table 1}. Some data of $ E_i $.
\end{center}
~\\

   From table 1, we can obtain:
   \begin{lemma}
   	The Tamagawa product $ \operatorname{Tam} (E_i/K) $ is a power of $ 2 $.
   \end{lemma}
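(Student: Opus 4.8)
The plan is to compute the Tamagawa number $c_{\mathfrak{q}}(E_i/K)$ at each place $\mathfrak{q}$ of $K$ of bad reduction and show each is a power of $2$; the Tamagawa product over $K$ is then visibly a power of $2$. Since $E_i$ has conductor $15$, the only primes of bad reduction over $\mathbb{Q}$ are $3$ and $5$, so the only places of bad reduction of $E_i/K$ lie above $3$ or $5$. At all these places the reduction is multiplicative (the curve is semistable), so by Tate's algorithm the Tamagawa number at a place $\mathfrak{q}$ above $\ell \in \{3,5\}$ equals $v_{\mathfrak{q}}(j(E_i)^{-1}) = v_{\mathfrak{q}}(\Delta_{\min})$ in the split case, and equals $1$ or $2$ (according to the parity of that valuation) in the non-split case.

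First I would recall that for split multiplicative reduction $c_{\mathfrak{q}} = -v_{\mathfrak{q}}(j)$ and for non-split multiplicative reduction $c_{\mathfrak{q}} = \gcd(2, -v_{\mathfrak{q}}(j)) \in \{1,2\}$, so the non-split places contribute only powers of $2$ automatically and require no further work. For the split case, I would use the $j$-invariants recorded in Table 1: for $E_1$ one has $-v_5(j(E_1)) = 4$, while for $E_2$ one has $-v_5(j(E_2)) = 2$. Over $\mathbb{Q}$ the reduction at $5$ is split multiplicative; since $5$ is unramified in the real quadratic field $K$ (its conductor-$15$ discriminant and the fact that the bad primes are $3,5$ do not force ramification here — and in any case $p\ne 5$ by Assumption 4 is irrelevant, what matters is the ramification of $5$ in $K$, which I would address by noting $e_{\mathfrak{q}/5}\le 2$), the valuation $v_{\mathfrak{q}}(j)$ over $K$ is $e_{\mathfrak{q}/5}$ times its value over $\mathbb{Q}$, hence $4e_{\mathfrak{q}/5} \in \{4,8\}$ for $E_1$ and $2e_{\mathfrak{q}/5}\in\{2,4\}$ for $E_2$ — powers of $2$ in every case. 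Likewise at $3$ the reduction is non-split multiplicative over $\mathbb{Q}$; over $K$ it may stay non-split or become split after a quadratic twist by the residue extension, but either way $c_{\mathfrak{q}}$ is $1$, $2$, or $-v_{\mathfrak{q}}(j(E_i))$, and $-v_3(j(E_1)) = 4$, $-v_3(j(E_2)) = 2$, so scaling by $e_{\mathfrak{q}/3}\le 2$ again yields only powers of $2$.

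Finally I would assemble: $\operatorname{Tam}(E_i/K) = \prod_{\mathfrak{q}} c_{\mathfrak{q}}(E_i/K)$ is a product of factors each lying in $\{1,2,4,8\}$, hence a power of $2$. The main point to be careful about is the behaviour of the reduction type and of $v_{\mathfrak{q}}(j)$ under the base change $\mathbb{Q}\rightsquigarrow K$: one must check that no place of bad reduction acquires additive reduction over $K$ (it cannot, since $E_i$ is semistable and semistability is preserved under base change) and that the split-versus-non-split dichotomy at $3$ and $5$ does not produce a factor with an odd prime divisor, which the explicit valuations of $j(E_i)$ from Table 1 rule out. No subtlety involving $p$ enters, so the lemma holds under the Assumptions without using conditions 2, 3, or 4.
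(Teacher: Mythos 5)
Your proof is correct and follows essentially the same route as the paper's: both arguments reduce to the observation that non-split multiplicative places contribute a Tamagawa factor of $1$ or $2$, while at split multiplicative places the factor equals $-v_{\mathfrak{q}}(j(E_i))$, which is a power of $2$ by the explicit factorizations of the $j$-invariants in Table 1. Your version is somewhat more explicit than the paper's one-line appeal to ``the decomposition of the $j$-invariant,'' since you track the ramification index $e_{\mathfrak{q}/\ell}\le 2$ under base change to $K$ and the possible change from non-split to split reduction, but the underlying idea is identical.
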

\begin{proof}
	If $ E_i $ has nonsplit multiplicative reduction at some $ v $, the Tamagawa factor at $ v $ is either $ 1 $ or $ 2 $ (see \cite{Silverman2009}, Appendix C, proof of Corollary 15.2.1). If $ E_i $ has split multiplicative reduction at some $ v $, by Kodaira and N\'eron's theorem (\cite{Silverman2009}, Theorem 6.1), the Tamagawa factor at $ v $ is a power of $ 2 $ from the decomposition of the $ j $-invariant. The lemma holds since $ E_i $ is semistable.
	
\end{proof}
\begin{lemma}
	Under our assumptions, $ E_i(K)[2^{\infty}] =E_i(K_\infty)[2^{\infty}]$, where $ K_\infty $ is the cyclotomic $\mathbb{Z}_p$-extension of $K$.
\end{lemma}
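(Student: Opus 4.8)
The plan is to prove the stronger statement that every point of $E_i(K_\infty)$ of $2$-power order already lies in $E_i(K)$; since the inclusion $E_i(K)[2^\infty]\subseteq E_i(K_\infty)[2^\infty]$ is automatic, this suffices. The mechanism is a degree count: for such a point $P$, the extension $K(P)/K$ will be forced to have degree that is simultaneously a power of $p$ and a power of $2$, hence equal to $1$.

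I would carry this out in the following steps. First, read off from Table~1 that $E_i(\mathbb{Q})\cong\mathbb{Z}/2\mathbb{Z}\times\mathbb{Z}/4\mathbb{Z}$ contains the full $2$-torsion subgroup $E_i[2]\cong(\mathbb{Z}/2\mathbb{Z})^2$; hence $E_i[2]\subseteq E_i(K)$ and the absolute Galois group $G_K=\operatorname{Gal}(\overline{K}/K)$ acts trivially on $E_i[2]$. Second, letting $K_n$ be the $n$-th layer of $K_\infty/K$, note that $\operatorname{Gal}(K_n/K)$ is cyclic of order $p^n$, so its subextensions are totally ordered and every field between $K$ and $K_n$ is one of the layers $K_j$, of $p$-power degree over $K$. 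Now take $P\in E_i(K_\infty)[2^\infty]$ of exact order $2^m$ (the case $m=0$ being trivial); since $K_\infty=\bigcup_n K_n$ we have $P\in E_i(K_n)$ for some $n$, so by the second step $[K(P):K]$ is a power of $p$. For the power-of-$2$ part I would filter $P$ through its multiples: set $P_j=2^{\,m-1-j}P$, so that $P_0\in E_i[2]$, $P_{m-1}=P$, and $2P_{j+1}=P_j$, giving a tower
\[
K=K(P_0)\subseteq K(P_1)\subseteq\cdots\subseteq K(P_{m-1})=K(P).
\]
At each stage the assignment $\sigma\mapsto\sigma(P_{j+1})-P_{j+1}$ takes values in $E_i[2]$ and, thanks to the triviality of the $G_K$-action on $E_i[2]$, is a group homomorphism $\operatorname{Gal}(\overline{K}/K(P_j))\to E_i[2]$ with kernel $\operatorname{Gal}(\overline{K}/K(P_{j+1}))$; hence $[K(P_{j+1}):K(P_j)]$ divides $4$, and $[K(P):K]$ divides $4^{m-1}$. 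Being at once a power of the odd prime $p$ and a power of $2$, this degree is $1$, so $P\in E_i(K)$.

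I do not expect a genuine obstacle here; the two points that need a little care are that the subextensions of the cyclic tower $K_n/K$ are totally ordered (so $K(P)$ really is one of the $K_j$) and that the triviality of the $G_K$-action on $E_i[2]$ is exactly what upgrades the connecting map from a cocycle to a homomorphism. It is worth remarking that, for this particular statement, only ``$p$ odd'' together with the Mordell--Weil data of Table~1 is used; the remaining Assumptions (finiteness of $E_i(K)$, triviality of the $p$-part of the Tate--Shafarevich group, complete splitting of $p$, and the exclusion of $\mathbb{Q}(\sqrt{5})$) appear not to be needed for this lemma.
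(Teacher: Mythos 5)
Your argument is correct and is essentially the paper's proof in pointwise form: both hinge on the facts that $E_i[2]$ is $\mathbb{Q}$-rational (so every $2$-power torsion point generates an extension of $K$ of $2$-power degree) and that $K_\infty/K$ is pro-$p$ with $p$ odd, so the degree must be $1$. The paper phrases this by observing that the image of $G_{\mathbb{Q}}$ on the Tate module $T_2E_i$ is pro-$2$ and comparing the images of $G_K$ and $G_{K_\infty}$, which is exactly the pro-$2$-ness your filtration $P_j=2^{\,m-1-j}P$ and the Kummer-type homomorphisms into $E_i[2]$ establish by hand; you are also right that only the oddness of $p$ and the Mordell--Weil data of Table~1 are used.
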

\begin{proof}
	Let $ \rho_{E_i,2} $ be the Galois representation associated to the Tate module $ T_2E_i $. As the elements in $ E_i[2] $ are $ \mathbb{Q} $-rational, the image of the absolute Galois group $ G_\mathbb{Q} $ is congruent to the identity element$\mod 2\mathbb{Z}_2$ and thus a pro-$ 2 $ group. As $ K $ and $ K_\infty $ are both Galois extension of $ \mathbb{Q} $, the index of $ G_K $ and $ G_{K_{\infty}} $ are both a power of $ 2 $. Then the lemma holds since $ p $ is an odd prime.
\end{proof}
\begin{rem}
In fact, under our assumptions, we have $ E_i(\mathbb{Q})[2^{\infty}] =E_i(K_\infty)[2^{\infty}]$ from the growth of $ 2^{\infty} $-torsion of $ E_i $ in number fields given above.
\end{rem}

\section{The proof}

   For a number field $ F $, we denote the absolute Galois group by $ G_F $. For an elliptic curve $ E $ over $ F $ and a rational prime $ l $, let $ \bar{\rho}_{E,l} : G_F \to \operatorname{GL_2}(\mathbb{F}_l) $ be the Galois representation associated to the $ l $-torsion points of $ E $. Let $ \zeta_{l} $ be a primitive $ l $-th root of unity.
	
   \begin{theorem}[\cite{Thorne2019}, Theorem 2]
   	Let $ E $ be an elliptic curve over a totally real number field $ F $, and suppose that (at least) one of the following is true:\\
   	1. The representation $ \bar{\rho}_{E,3}|_{G_{F(\zeta_{3})}}$ is absolutely irreducible.\\
   	2. $ \sqrt{5} \notin F$, and $ \bar{\rho}_{E,5}$ is irreducible.\\
   	Then $ E $ is modular.
   \end{theorem}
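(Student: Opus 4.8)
The plan is to prove the criterion by the classical $3$--$5$ switch of Wiles and Taylor--Wiles, in the form that has been extended to Hilbert modular forms over a totally real base. The overarching strategy is to combine a \emph{residual modularity} input at a prime $l \in \{3,5\}$ with a \emph{modularity lifting theorem} for the $l$-adic representation $\rho_{E,l} \colon G_F \to \mathrm{GL}_2(\mathbb{Z}_l)$. Before splitting into cases, I would record that the hypotheses of the available lifting theorems are automatic for an elliptic curve $E$ over a totally real field: since $F$ is totally real, each $\bar{\rho}_{E,l}$ is totally odd (every complex conjugation has determinant $-1$), and at the places of $F$ above $l$ the local representations of $\bar{\rho}_{E,l}$ and $\rho_{E,l}$ are either ordinary or potentially Barsotti--Tate, a range covered by the lifting theorems of Kisin, Gee, and Barnet-Lamb--Gee--Geraghty. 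Thus in each case the only genuine input I must supply is residual modularity together with the stated residual irreducibility.

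Suppose first that case $1$ holds, so that $\bar{\rho}_{E,3}|_{G_{F(\zeta_3)}}$ is absolutely irreducible. Because $\mathrm{GL}_2(\mathbb{F}_3)$ is solvable, the Langlands--Tunnell theorem shows the odd representation $\bar{\rho}_{E,3}$ is modular: it arises from a Hilbert modular eigenform, which (by the usual congruence argument, e.g.\ multiplying by a suitable Eisenstein-type factor) I may take of parallel weight two. The absolute irreducibility over $F(\zeta_3)$ is exactly the Taylor--Wiles hypothesis, so the $l=3$ lifting theorem promotes the modularity of $\bar{\rho}_{E,3}$ to that of $\rho_{E,3}$, and hence $E$ is modular.

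Now suppose case $2$ holds, so $\sqrt5 \notin F$ and $\bar{\rho}_{E,5}$ is irreducible, but $\bar{\rho}_{E,3}$ need not be irreducible, so Langlands--Tunnell cannot be applied to $E$ directly. Here I would produce an auxiliary curve. Let $Y$ be the twist over $F$ of the modular curve $X(5)$ parametrizing pairs $(E',\iota)$ with a symplectic isomorphism $\iota \colon E'[5] \xrightarrow{\sim} E[5]$ respecting the Weil pairings; since $X(5)$ has genus $0$ and $Y$ already carries the $F$-rational point attached to $E$, the curve $Y$ has infinitely many $F$-points. A Hilbert-irreducibility argument over $F$ then selects an $F$-point giving an elliptic curve $E'/F$ with $\bar{\rho}_{E',5} \cong \bar{\rho}_{E,5}$ and with $\bar{\rho}_{E',3}|_{G_{F(\zeta_3)}}$ absolutely irreducible, while keeping the local behaviour at $5$ within the range of the lifting theorem. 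By case $1$, such an $E'$ is modular, hence $\bar{\rho}_{E,5} \cong \bar{\rho}_{E',5}$ is modular. Finally, a short group-theoretic lemma shows that when $\sqrt5 \notin F$ an irreducible $\bar{\rho}_{E,5}$ remains absolutely irreducible after restriction to $G_{F(\zeta_5)}$ (the only obstruction would be dihedral image cut out by the unique quadratic subfield $\mathbb{Q}(\sqrt5) \subset \mathbb{Q}(\zeta_5)$, which is excluded), so the $l=5$ lifting theorem applies to $E$ itself and yields its modularity.

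The main obstacle is the auxiliary-curve step: I must guarantee an $F$-rational point on the twisted modular curve $Y$ that \emph{simultaneously} makes $\bar{\rho}_{E',3}$ absolutely irreducible over $F(\zeta_3)$ and keeps the mod-$5$ deformation problem within reach of the lifting theorem. This means controlling the finitely many ``bad'' loci on $Y$ (where the mod-$3$ image degenerates to a reducible or exceptional subgroup, or the local type at $5$ fails) and running Hilbert irreducibility over a general totally real base field rather than over $\mathbb{Q}$, which is precisely where the geometry of $X(5)$ and the arithmetic of $F$ must be made to cooperate.
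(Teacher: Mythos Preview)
The paper does not prove this statement at all: Theorem~3.1 is simply quoted from \cite{Thorne2019} (Thorne's Theorem~2) and used as a black box input to the rest of the argument. There is therefore no proof in the paper for your proposal to be compared against.

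That said, your outline is precisely the standard $3$--$5$ switch of Wiles and Taylor--Wiles as extended to totally real fields, and this is indeed what lies behind Thorne's theorem (which in his paper is itself assembled from the modularity lifting theorems of Kisin, Gee, and Barnet-Lamb--Gee--Geraghty together with Langlands--Tunnell and the auxiliary-curve trick). Your identification of the genuine difficulty---producing the auxiliary curve $E'$ over a general totally real $F$ with the right mod-$3$ image while staying inside the range of the lifting theorems at $5$---is accurate; this is exactly the step that requires Hilbert irreducibility over $F$ and some care with the thin loci on the twisted $X(5)$. One small point: your justification that irreducibility of $\bar\rho_{E,5}$ together with $\sqrt5\notin F$ forces absolute irreducibility over $F(\zeta_5)$ is slightly compressed. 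The argument is that failure would make the projective image dihedral, induced from a quadratic extension contained in $F(\zeta_5)$, hence from $F(\sqrt5)$; one then uses that $\det\bar\rho_{E,5}$ is the mod-$5$ cyclotomic character to derive a contradiction, and it is worth writing this out rather than asserting it.
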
	

    Following \cite{Freitas2015} and \cite{Thorne2019}, we consider modular curves $ X(s3,b5) $ and $ X(b3,b5) $. The next corollary is an analogue to [\cite{Thorne2019}, Lemma 3] .
    
    \begin{cor}
    	Let $ F $ be a totally real field such that $ \sqrt{5} \notin F$.\\
    	1. If $ E/F $ is an elliptic curve which is not modular, then $ E $ determines an $ F $-rational point of one of the curves $ X(s3, b5) $, $ X(b3, b5) $.\\
    	2. Recall that $K$ is a real quadratic field satisfying $ K \ne \mathbb{Q}(\sqrt{5})$. If $ F/K $ is cyclic and $ X(s3, b5)(F) = X(s3, b5)(K) $, $ X(b3, b5)(F) = X(b3, b5)(K) $, then all elliptic curves
    	over $ F $ are modular.
    \end{cor}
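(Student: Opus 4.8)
The plan is to prove the two assertions in turn: the first is the exact analogue in this setting of \cite{Thorne2019}, Lemma~3, and the second is a formal consequence of it together with Theorem~1.2 and solvable base change. In part~2 I will use that $F$ is totally real with $\sqrt5\notin F$; this is automatic when $F\subseteq K_\infty$, since then $K_\infty/\mathbb{Q}$ is Galois with group $\mathbb{Z}/2\times\mathbb{Z}_p$ (note $p$ is odd, so the cyclotomic $\mathbb{Z}_p$-extension of $\mathbb{Q}$ has no quadratic subfield), whose unique subgroup of index two cuts out $K$; hence $K$ is the only quadratic subfield of $K_\infty$, and $K\ne\mathbb{Q}(\sqrt5)$ forces $\sqrt5\notin K_\infty\supseteq F$.

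For part~1, suppose $E/F$ is not modular. By Theorem~3.1 both of its alternatives fail, so $\bar\rho_{E,5}$ is reducible and $\bar\rho_{E,3}|_{G_{F(\zeta_3)}}$ is not absolutely irreducible. Reducibility of $\bar\rho_{E,5}$ gives a $G_F$-stable line in $E[5]$, hence a non-cuspidal $F$-point of $X_0(5)=X(b5)$. For the mod-$3$ image I would argue as in \cite{Thorne2019}, Lemma~3: since $F$ is totally real, complex conjugation lies in $G_F$ and acts on $E[3]$ with eigenvalues $1$ and $-1$, so the image of $\bar\rho_{E,3}$ contains such an element and is therefore contained in no Cartan subgroup of $\mathrm{GL}_2(\mathbb{F}_3)$; moreover $\zeta_3\notin F$ (it is not real), so $[F(\zeta_3):F]=2$ and $\det\bar\rho_{E,3}$ is surjective, whence the kernel of the determinant has index two in the image and carries an absolutely reducible subrepresentation. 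The elementary classification of subgroups of $\mathrm{GL}_2(\mathbb{F}_3)$ with these properties then shows that, up to conjugacy, the image of $\bar\rho_{E,3}$ is contained either in a Borel subgroup or in the normalizer of a split Cartan subgroup — the full normalizer of the nonsplit Cartan being excluded because there the restriction to the determinant kernel is again absolutely irreducible. In the Borel case $E[3]$ has a $G_F$-stable line, so together with the $5$-structure $E$ determines an $F$-point of $X(b3,b5)$; in the split Cartan case $E$ determines an $F$-point of $X(s3,b5)$. This proves part~1.

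For part~2, suppose $E/F$ is not modular. By part~1, $E$ determines a non-cuspidal point $P$ of $X(s3,b5)(F)$ or of $X(b3,b5)(F)$; by the hypotheses $X(s3,b5)(F)=X(s3,b5)(K)$ and $X(b3,b5)(F)=X(b3,b5)(K)$, this $P$ already lies in the set of $K$-points of the relevant curve. Composing with the map to the $j$-line, which is defined over $\mathbb{Q}$, gives $j(E)=j(P)\in K$. If $j(E)\in\{0,1728\}$ then $E$ has complex multiplication, hence is modular. Otherwise choose any elliptic curve $E_0/K$ with $j(E_0)=j(E)$; it is modular by Theorem~1.2, and since $F/K$ is cyclic, in particular solvable, the base change $E_0\times_K F$ is modular over $F$. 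Finally $E$ and $E_0\times_K F$ have the same $j$-invariant, which is neither $0$ nor $1728$, so $E$ is a quadratic twist of $E_0\times_K F$ over $F$; as a quadratic twist of a modular elliptic curve is modular, $E$ is modular — a contradiction. Hence every elliptic curve over $F$ is modular.

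The only step with genuine content is the $\mathrm{GL}_2(\mathbb{F}_3)$ group theory in part~1, and it is imported from \cite{Thorne2019}, Lemma~3; the remainder is bookkeeping about $\sqrt5$ and totally real fields, the $j\in\{0,1728\}$ exceptions, and the base-change-and-twist reduction. I expect the main obstacle to lie not in this corollary but in verifying its hypotheses $X(s3,b5)(F)=X(s3,b5)(K)$ and $X(b3,b5)(F)=X(b3,b5)(K)$ for the fields $F$ inside the $\mathbb{Z}_p$-extension of $K$; that is where the Iwasawa-theoretic input of the following sections will be needed.
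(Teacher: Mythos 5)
Your proof is correct and takes essentially the same route as the paper, which disposes of part 1 by citing Theorem 3.1 together with \cite{Freitas2015}, Proposition 4.1, and of part 2 by citing part 1, Theorem 1.2 and cyclic base change for $\mathrm{GL}_2$; you have simply unpacked the $\mathrm{GL}_2(\mathbb{F}_3)$ group theory and the $j$-invariant/twist/base-change bookkeeping hidden in those citations. One minor slip: an element with eigenvalues $1$ and $-1$ \emph{does} lie in a split Cartan subgroup of $\mathrm{GL}_2(\mathbb{F}_3)$, so complex conjugation only excludes the nonsplit Cartan itself rather than ``no Cartan subgroup''; this does not affect your conclusion, since the split Cartan is contained in its normalizer and the case analysis still lands on a Borel or the normalizer of a split Cartan.
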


    \begin{proof}
    	The first assertion is a consequence of Theorem 3.1 and [\cite{Freitas2015}, Proposition 4.1]. The second part follows from the first one, Theorem 1.2 and base change for $  \operatorname{GL_2} $ (see \cite{Langlands1980}).
    \end{proof}

    \begin{prop}[\cite{Thorne2019}, Proposition 4] 
    	The modular curves $ X(b3, b5) $, $ X(s3, b5) $ are isomorphic over $ \mathbb{Q} $ to $ E_1 $, $ E_2 $, respectively.
    \end{prop}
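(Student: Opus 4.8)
This statement is [\cite{Thorne2019}, Proposition 4], building on the analysis in [\cite{Freitas2015}]; here is how I would establish it. The plan is, for each of the two modular curves, to unwind its moduli description, compute the genus, exhibit a $\mathbb{Q}$-rational point, pass to a global minimal Weierstrass model, and finally pin down the resulting curve inside its conductor-$15$ isogeny class.

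I would begin with $X(b3,b5)$. By definition it is the fibre product $X_0(3)\times_{X(1)}X_0(5)$, which is the classical modular curve $X_0(15)$, since a cyclic $3$-isogeny together with a cyclic $5$-isogeny amounts to a cyclic $15$-isogeny. The genus formula for $X_0(N)$ gives $g\bigl(X_0(15)\bigr)=1$, and $X_0(15)$ has $\mathbb{Q}$-rational cusps, so taking the cusp $\infty$ as origin makes it an elliptic curve over $\mathbb{Q}$. I would then write down an affine model from modular units of level $15$ (for instance suitable $\eta$-quotients) and reduce it to a global minimal Weierstrass equation; the outcome should be $y^2+xy+y=x^3+x^2-10x-10$, namely $E_1$.

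Next I would treat $X(s3,b5)$. Here $X(s3)$ is the modular curve whose non-cuspidal points classify elliptic curves $E$ for which $\bar{\rho}_{E,3}$ has image in the normalizer of a split Cartan subgroup of $\operatorname{GL}_2(\mathbb{F}_3)$, with the associated quadratic (``swap'') character equal to the mod-$3$ cyclotomic character --- exactly the configuration left open by Theorem 3.1. This $X(s3)$ has genus zero; starting from a known explicit model of it over $\mathbb{Q}$ (recording its cusps), I would form $X(s3,b5)=X(s3)\times_{X(1)}X_0(5)$, verify by Riemann--Hurwitz that $X(s3,b5)$ has genus $1$ with a rational cusp, and reduce its equation to global minimal form, obtaining $y^2+xy+y=x^3+x^2-5x+2$, namely $E_2$.

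To be sure in each case that I have the correct curve --- and not a quadratic twist or an isogenous curve of the same conductor --- I would check that the conductor is $15$ with the reduction behaviour of Table 1, and compare $\#X(\mathbb{F}_\ell)$, computed by counting the relevant level structures on elliptic curves over $\mathbb{F}_\ell$, against $\#E_i(\mathbb{F}_\ell)$ for a few small primes $\ell\nmid 15$; this determines the curve uniquely in its (small) conductor-$15$ isogeny class, while the explicit change of coordinates certifies that the isomorphism is defined over $\mathbb{Q}$. The hard part is $X(s3,b5)$: in contrast to $X_0(15)$ it is not a textbook object, so I would first have to fix the correct twisted split-Cartan moduli problem and a model for it over $\mathbb{Q}$ before the resulting genus-$1$ curve can be matched with $E_2$. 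Since exactly this bookkeeping is carried out in [\cite{Thorne2019}, Proposition 4], we may simply invoke that computation.
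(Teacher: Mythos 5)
The paper itself gives no argument here: it simply imports this statement from Thorne's Proposition~4 (which in turn rests on the computations of Freitas--Le~Hung--Siksek), so your final move of deferring to that citation is exactly what the paper does, and your sketch of the underlying computation --- $X(b3,b5)=X_0(15)$, which is the curve 15A1, and the identification of $X(s3,b5)$ via an explicit model of the genus-zero curve $X(s3)$ fibred against $X_0(5)$ --- is the right shape and matches the source.

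One step of your verification plan, however, is genuinely wrong: comparing $\#X(\mathbb{F}_\ell)$ with $\#E_i(\mathbb{F}_\ell)$ for primes $\ell\nmid 15$ cannot ``determine the curve uniquely in its conductor-$15$ isogeny class.'' Isogenous elliptic curves over $\mathbb{Q}$ have the same $a_\ell$ for every prime of good reduction, hence identical point counts over every $\mathbb{F}_\ell$; since $E_1$ and $E_2$ lie in the \emph{same} isogeny class 15a (they are 15A1 and 15A3), and by Table~1 even have the same Mordell--Weil group $\mathbb{Z}/2\mathbb{Z}\times\mathbb{Z}/4\mathbb{Z}$, this check is incapable of telling the two candidate curves apart --- which is precisely the distinction the proposition is asserting. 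The identification must instead come from the explicit minimal Weierstrass model (equivalently the $j$-invariant), i.e.\ from the first, computational part of your plan; the point-count comparison should be dropped or demoted to a consistency check of the isogeny class only.
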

	\begin{rem}
		$ E_1 $ and $ E_2 $ are elliptic curves of Cremona label $ 15A1$ and $ 15A3 $, respectively. See \cite{Cremona1997}.
	\end{rem}

    \begin{lemma}
    	Let $ l $ be an odd prime. Under our assumptions, we have $  E_i(\mathbb{Q})[l^{\infty}] =E_i(K_\infty)[l^{\infty}] $.
    \end{lemma}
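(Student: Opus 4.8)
The plan is to reduce the identity, for a fixed odd prime $l$, to the vanishing of $E_i(K_\infty)[l]$, then to dispose of large $l$ by group theory and of $l\in\{3,5\}$ by a short computation with the two curves. By Table~1 the Mordell--Weil group $E_i(\mathbb{Q})$ is finite with torsion subgroup $\mathbb{Z}/2\mathbb{Z}\times\mathbb{Z}/4\mathbb{Z}$, so $E_i(\mathbb{Q})[l^{\infty}]=0$ for every odd prime $l$, and the asserted equality amounts to $E_i(K_{\infty})[l^{\infty}]=0$; since every nonzero element of $E_i(K_{\infty})[l^{\infty}]$ has a multiple of exact order $l$, it suffices to prove $E_i(K_{\infty})[l]=0$. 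We will use two features of $K_{\infty}$: it is the compositum of $K$ with the cyclotomic $\mathbb{Z}_p$-extension of $\mathbb{Q}$, hence totally real, so $\mu_l\not\subseteq K_{\infty}$ and $E_i(K_{\infty})[l]$ is $0$ or cyclic of order $l$; and $\operatorname{Gal}(K_{\infty}/\mathbb{Q})$ is an extension of $\mathbb{Z}/2\mathbb{Z}$ by $\operatorname{Gal}(K_{\infty}/K)\cong\mathbb{Z}_p$, so its finite quotients have order $2^{a}p^{b}$ and it is a pro-$\{2,p\}$ group.

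For $l\ge 7$ the tables of \cite{LMFDBCollaboration2022} give that $\bar{\rho}_{E_i,l}$ is surjective. Since $G_{K_{\infty}}\trianglelefteq G_{\mathbb{Q}}$, the subgroup $\bar{\rho}_{E_i,l}(G_{K_{\infty}})$ is normal in $\operatorname{GL}_2(\mathbb{F}_l)$ and the quotient $\operatorname{GL}_2(\mathbb{F}_l)/\bar{\rho}_{E_i,l}(G_{K_{\infty}})$ is a quotient of $\operatorname{Gal}(K_{\infty}/\mathbb{Q})$, hence a finite $\{2,p\}$-group. For $l\ge 5$ the group $\operatorname{PSL}_2(\mathbb{F}_l)$ is nonabelian simple and $|\operatorname{PSL}_2(\mathbb{F}_l)|=\tfrac12 l(l^{2}-1)$ is not of the form $2^{a}p^{b}$ (otherwise, $l$ being an odd prime, $l=p$ and $(l^{2}-1)/2$ is a power of $2$, forcing $l=3$). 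Hence a normal subgroup of $\operatorname{GL}_2(\mathbb{F}_l)$ of $\{2,p\}$-power index cannot intersect $\operatorname{SL}_2(\mathbb{F}_l)$ in a central subgroup --- that would display $\operatorname{PSL}_2(\mathbb{F}_l)$ as a subquotient of a solvable group --- so it contains $\operatorname{SL}_2(\mathbb{F}_l)$, which fixes no nonzero vector of $E_i[l]$. Therefore $E_i(K_{\infty})[l]=0$.

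Now let $l\in\{3,5\}$ and suppose $0\ne P\in E_i(K_{\infty})[l]$. Then $\langle P\rangle=E_i(K_{\infty})[l]$ (by the first paragraph), and this is $G_K$-stable, being the subgroup of $E_i[l]$ fixed by the normal subgroup $G_{K_{\infty}}$; so $G_K$ acts on $\langle P\rangle\cong\mathbb{Z}/l\mathbb{Z}$ through a character $\phi\colon G_K\to\mathbb{F}_l^{\times}$ which is trivial on $G_{K_{\infty}}$, hence factors through $\operatorname{Gal}(K_{\infty}/K)\cong\mathbb{Z}_p$. Since $p$ is odd and $|\mathbb{F}_l^{\times}|=l-1\in\{2,4\}$, $\phi$ is trivial, so $P\in E_i(K)$. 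Let $\sigma$ generate $\operatorname{Gal}(K/\mathbb{Q})$: then $P^{\sigma}\in E_i(K)[l]$ and $P^{\sigma}\ne P$ (else $P\in E_i(\mathbb{Q})[l]=0$). If $P^{\sigma}=-P$, then $\langle P\rangle$ is $G_{\mathbb{Q}}$-stable and $E_i$ has a $\mathbb{Q}$-rational isogeny of degree $l$; but $\bar{\rho}_{E_i,l}$ is unramified at $2$ with $\bar{\rho}_{E_i,l}(\operatorname{Frob}_2)$ of characteristic polynomial $X^{2}+X+2$ (as $a_2(E_i)=-1$), which is irreducible modulo $3$ and modulo $5$, so $\bar{\rho}_{E_i,l}$ has no $G_{\mathbb{Q}}$-stable line --- contradiction. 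Otherwise $P^{\sigma}\notin\{P,-P\}$; since $\sigma^{2}\in G_K$ fixes $P$, a relation $P^{\sigma}=cP$ would give $c^{2}=1$, so $P,P^{\sigma}$ are linearly independent and form an $\mathbb{F}_l$-basis of $E_i[l]$. Then $E_i[l]\subseteq E_i(K)$, hence $\mathbb{Q}(\zeta_l)\subseteq K$ by the Weil pairing --- impossible, as $\mathbb{Q}(\zeta_3)=\mathbb{Q}(\sqrt{-3})$ is imaginary and $[\mathbb{Q}(\zeta_5):\mathbb{Q}]=4>[K:\mathbb{Q}]$. This proves $E_i(K_{\infty})[l]=0$.

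The only non-formal inputs --- and, I expect, the only real obstacle --- are the surjectivity of $\bar{\rho}_{E_i,l}$ for $l\ge 7$ and the irreducibility of $\bar{\rho}_{E_i,l}$ for $l\in\{3,5\}$ (equivalently, that $E_i$ has no $\mathbb{Q}$-rational isogeny of odd prime degree). The irreducibility for $l\in\{3,5\}$ is the small computation with $a_2(E_i)=-1$ used above; the surjectivity for $l\ge 7$ is recorded in \cite{LMFDBCollaboration2022}, and the isogeny structure of the class $15A$ is in \cite{Cremona1997}. Everything else is pure group theory together with the totally real nature of $K_{\infty}$ and the pro-$\{2,p\}$ nature of $\operatorname{Gal}(K_{\infty}/\mathbb{Q})$.
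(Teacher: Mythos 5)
Your proof is correct. It differs from the paper's mainly in how the small primes are handled. The paper gives a single uniform argument for every odd $l$: citing surjectivity of $\bar{\rho}_{E_i,l}$ for all odd $l$, it notes that a nonzero fixed vector would put $\bar{\rho}_{E_i,l}(G_{K_\infty})$ inside a vector stabilizer of order $l(l-1)$, so its index in $\operatorname{GL}_2(\mathbb{F}_l)$ would be divisible by $(l+1)(l-1)$ while also dividing $2p^n$, forcing $p=2$. Your $l\ge 7$ case is the same idea expressed through the simplicity of $\operatorname{PSL}_2(\mathbb{F}_l)$ instead of an order count; both work. Where you genuinely diverge is at $l\in\{3,5\}$: you replace surjectivity by mere irreducibility of $\bar{\rho}_{E_i,l}$, which you verify by hand from $a_2(E_i)=-1$ (the characteristic polynomial $X^2+X+2$ of Frobenius at $2$ being irreducible modulo $3$ and modulo $5$), after descending a putative $l$-torsion point from $K_\infty$ to $K$ using that $\operatorname{Gal}(K_\infty/K)\cong\mathbb{Z}_p$ has no nontrivial quotient of order dividing $l-1\in\{2,4\}$. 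This buys a self-contained treatment of the two primes that actually matter for the modular curves $X(s3,b5)$, $X(b3,b5)$, at the cost of length. One streamlining: since $K_\infty/\mathbb{Q}$ is Galois, $G_{K_\infty}$ is normal in $G_{\mathbb{Q}}$ and not merely in $G_K$, so $E_i(K_\infty)[l]$ is already $G_{\mathbb{Q}}$-stable; a nonzero such group is then immediately a $G_{\mathbb{Q}}$-stable line, contradicting irreducibility, and your sub-case $P^{\sigma}\notin\{P,-P\}$ can never occur (though your treatment of it via the Weil pairing is also correct).
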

	\begin{proof}
		Considering the Galois representation $ \bar{\rho}_{E,l} : G_{\mathbb{Q}} \to \operatorname{GL_2}(\mathbb{F}_l) $, we know that $ \bar{\rho}_{E,l}$ is surjective (see [\cite{Thorne2019}, proof of Theorem 5] or [\cite{Yoshikawa2016}, Lemma 2.3]). If $ K_{\infty} $ has an $ l $-torsion point, then the order of $ \bar{\rho}_{E,l}|_{G_{K_{\infty}}}$ is a factor of $ l(l-1) $. Since $ [K:\mathbb{Q}] =2$ and $ K_{\infty} $ is the $\mathbb{Z}_p$-extension of $ K $, the index of $ \bar{\rho}_{E,l}|_{G_{K_{\infty}}}$ in $ \operatorname{GL_2}(\mathbb{F}_l) $ is either $ p^n $ or $ 2p^n $ for some $ n $. It is known that the order of the finite group $ \operatorname{GL_2}(\mathbb{F}_l) $ is $ l(l+1)(l-1)^2 $, so we have $ (l+1)(l-1) \mid 2p^n $. The only possibility is $ p=2 $, but it is against our assumptions.
	\end{proof}
     \begin{cor}
     	Under our assumptions, the $ p $-Selmer group $ \operatorname{Sel}(E_i/K)_p $ is trivial.
     \end{cor}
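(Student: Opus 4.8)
The plan is to deduce this from the fundamental descent exact sequence
\[
0 \longrightarrow E_i(K)/pE_i(K) \longrightarrow \operatorname{Sel}(E_i/K)_p \longrightarrow \text{\fontencoding{OT2}\selectfont X}(E_i/K)[p] \longrightarrow 0,
\]
by checking that both outer terms vanish under the \textbf{Assumptions}. The right-hand term is handled at once: Assumption 3 says the $p$-primary group {\fontencoding{OT2}\selectfont X}$(E_i/K)_p$ is trivial, hence a fortiori {\fontencoding{OT2}\selectfont X}$(E_i/K)[p]=0$.

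For the left-hand term I would use Assumption 2: since $E_i(K)$ is finite there is an isomorphism $E_i(K)/pE_i(K)\cong E_i(K)[p]$, so it suffices to show that $E_i(K)$ contains no point of order $p$. As $p$ is odd, Lemma 3.4 applies with $l=p$ and gives, using $K\subseteq K_\infty$,
\[
E_i(K)[p^\infty]\subseteq E_i(K_\infty)[p^\infty]=E_i(\mathbb{Q})[p^\infty].
\]
By Table 1, $E_i(\mathbb{Q})\cong\mathbb{Z}/2\mathbb{Z}\times\mathbb{Z}/4\mathbb{Z}$ has order $8$, which is prime to $p$, so $E_i(\mathbb{Q})[p^\infty]=0$; hence $E_i(K)[p]=0$ and $E_i(K)/pE_i(K)=0$. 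Substituting the two vanishings into the exact sequence gives $\operatorname{Sel}(E_i/K)_p=0$.

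There is no serious obstacle here --- the proof is only a few lines --- but the step to be careful with is the vanishing of the Mordell--Weil contribution: this does \emph{not} follow from finiteness of $E_i(K)$ by itself, since a point of order $p$ in $E_i(K)$ would keep the exact sequence from collapsing. What excludes this is the surjectivity of $\bar{\rho}_{E_i,p}$ together with the oddness of $p$, as packaged in Lemma 3.4, combined with the explicit torsion data of Table 1. One may also note that the same two inputs yield the stronger statement that the $p^\infty$-Selmer group $\operatorname{Sel}(E_i/K)_{p^\infty}$ is trivial, since $E_i(K)\otimes\mathbb{Q}_p/\mathbb{Z}_p=0$ by finiteness of $E_i(K)$ and {\fontencoding{OT2}\selectfont X}$(E_i/K)[p^\infty]=0$ by Assumption 3; this is the version of the statement that feeds into the Iwasawa-theoretic step later.
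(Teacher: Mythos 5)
Your proof is correct and is precisely the argument the paper leaves implicit: its one-line proof cites only the torsion lemma, and your descent exact sequence, with Assumption 3 killing the Tate--Shafarevich term and Assumption 2 plus the torsion lemma plus Table 1 killing the Mordell--Weil term, is the intended expansion. The only slip is the cross-reference: the lemma you invoke (surjectivity of $\bar{\rho}_{E_i,l}$ for odd $l$, giving $E_i(\mathbb{Q})[l^{\infty}]=E_i(K_\infty)[l^{\infty}]$) is Lemma 3.5 in the paper, not Lemma 3.4.
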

     \begin{proof}
     	It is a direct consequence of Lemma 3.5.
     \end{proof}
      
      To prove the next proposition, we will use Iwasawa theory. We denote a generator of the characteristic ideal of the Pontryagin dual of the Selmer group $ \operatorname{Sel}(E_i/K_{\infty})_p $ by $ f_{E_i} $. Let the $ \lambda $-invariant $ \lambda_{E_i} $ be the $\mathbb{Z}_p $-corank of $ \operatorname{Sel}(E_i/K_{\infty})_p $.
      
      \begin{prop}
      	Under our assumptions, we have $E_i(K) =E_i(K_\infty)$.
      \end{prop}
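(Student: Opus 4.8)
The plan is to show that $E_i(K_\infty)$ and $E_i(K)$ have the same rank and the same prime-to-$p$ torsion, and then handle the $p$-part separately. First, I would combine the finiteness of $E_i(K)$ (Assumption~2) with Corollary~3.6, which says $\operatorname{Sel}(E_i/K)_p$ is trivial. Triviality of the $p$-Selmer group forces $E_i(K)$ to have no $p$-torsion and $E_i(K) \otimes \mathbb{Q}_p/\mathbb{Z}_p = 0$; together with Assumption~3 (trivial $p$-part of Sha over $K$) this pins down the relevant $p$-adic invariants at the base level. The torsion away from $p$ is already controlled: Lemma~2.3 gives $E_i(K)[2^\infty] = E_i(K_\infty)[2^\infty]$ and Lemma~3.5 gives $E_i(\mathbb{Q})[l^\infty] = E_i(K_\infty)[l^\infty]$ for every odd prime $l$, so the full prime-to-$p$ torsion does not grow in the tower. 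Hence the only thing that can change between $E_i(K)$ and $E_i(K_\infty)$ is the rank and the $p$-primary torsion.

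Next I would bring in Iwasawa theory for the cyclotomic $\mathbb{Z}_p$-extension. Since $p$ splits completely in $K$ (Assumption~4), each $E_i$ has good or multiplicative reduction at the primes above $p$ (by the reduction data in Table~1, $E_i$ is semistable, with multiplicative reduction only at $3$ and $5$, hence good reduction at every $p \neq 3,5$, and for $p \in \{3,5\}$ one checks the split/non-split behaviour explicitly). In the good ordinary or multiplicative case, the relevant control theorem (Mazur's, or its refinements due to Greenberg) gives that the natural map $\operatorname{Sel}(E_i/K)_p \to \operatorname{Sel}(E_i/K_\infty)_p^{\Gamma}$ has finite, controlled kernel and cokernel; since the source is trivial by Corollary~3.6, $\operatorname{Sel}(E_i/K_\infty)_p^{\Gamma}$ is finite. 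A standard Iwasawa-module argument (Nakayama's lemma applied to the Pontryagin dual $X$, a finitely generated $\mathbb{Z}_p[[\Gamma]]$-module) then shows that $X$ itself is finite, so $\lambda_{E_i} = 0$ and $\operatorname{Sel}(E_i/K_\infty)_p$ is finite; in fact one expects $f_{E_i}$ to be a unit and the Selmer group over $K_\infty$ to vanish, using that it is divisible with trivial $\Gamma$-invariants. Feeding this back through the descent exact sequence $0 \to E_i(K_\infty)\otimes \mathbb{Q}_p/\mathbb{Z}_p \to \operatorname{Sel}(E_i/K_\infty)_p \to \text{\fontencoding{OT2}\selectfont X}(E_i/K_\infty)_p \to 0$ shows $E_i(K_\infty)$ has rank $0$ and no $p$-torsion.

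Finally, I would assemble the pieces: $E_i(K_\infty)$ is finite (rank $0$), its $p$-torsion is trivial, and its prime-to-$p$ torsion equals that of $E_i(K)$ by Lemmas~2.3 and~3.5. Since $E_i(K) \subseteq E_i(K_\infty)$ and both are finite groups with the same torsion at every prime, the inclusion is an equality, giving $E_i(K) = E_i(K_\infty)$.

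The main obstacle I anticipate is making the control-theorem step fully rigorous at the primes $p \in \{3,5\}$, where $E_i$ has multiplicative (split at $5$, non-split at $3$) rather than good ordinary reduction: one must invoke the appropriate version of Mazur's control theorem for multiplicative reduction and check that the local error terms are genuinely trivial (not merely finite) so that the conclusion $\operatorname{Sel}(E_i/K_\infty)_p = 0$, and not just finiteness, actually goes through. A secondary subtlety is justifying that $p$ splitting completely in $K$ is compatible with the reduction behaviour forced by the conductor $15$, i.e. excluding or separately treating the cases $p = 3$ and $p = 5$ against Assumption~4.
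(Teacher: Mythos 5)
Your overall reduction --- control the prime-to-$p$ torsion via Lemmas 2.2 and 3.5 (note: the $2^\infty$-torsion statement is Lemma 2.2, not 2.3), then show $E_i(K_\infty)$ is finite by proving $\operatorname{Sel}(E_i/K_\infty)_p$ is finite --- is the same skeleton as the paper's proof, and your treatment of the good ordinary and multiplicative cases matches the paper's Cases 1 and 2 (the paper computes Greenberg's formula for $f_{E_i}(0)$ rather than running the control theorem plus Nakayama, but these are two presentations of the same argument). However, there is a genuine gap: you have silently dropped the good \emph{supersingular} case. You observe that $E_i$ has ``good or multiplicative reduction'' at the primes above $p$, but then only argue ``in the good ordinary or multiplicative case.'' For a prime $p\ge 7$ at which $E_i$ is supersingular, Mazur's control theorem and its Greenberg-type refinements do not apply: the local error terms are infinite, and in fact the classical Selmer group $\operatorname{Sel}(E_i/K_\infty)_p$ fails to be cotorsion over the Iwasawa algebra, so no Nakayama argument can give finiteness of its Pontryagin dual. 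This is precisely why the paper has a separate Case 3, which invokes the theorem of Iovita--Pollack (using that $a_p=0$, automatic for supersingular $p\ge 5$ by the Hasse bound, together with the complete splitting of $p$) to conclude that $E_i(K_\infty)$ is finite. Without this case your proof does not cover all primes permitted by the Assumptions.

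Two smaller points. First, the difficulty you flag at the end --- a possible clash between $p\in\{3,5\}$ and Assumption 4 --- is a non-issue: for such $p$ the curves simply have multiplicative reduction at every place above $p$, which is exactly your (and the paper's) multiplicative case; the case that genuinely needs separate treatment is the supersingular one. Second, be careful with the Nakayama step even in the ordinary setting: finiteness of the $\Gamma$-coinvariants of the dual Selmer group does not imply finiteness of the dual itself (a module such as $\Lambda/p\Lambda$ has finite coinvariants but is infinite, reflecting a positive $\mu$-invariant), so you either need the local error terms to vanish identically, as you note, or you need the precise statement that $f_{E_i}(0)$ is a $p$-adic unit, which is the route the paper takes. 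Relatedly, triviality of $\operatorname{Sel}(E_i/K)_p$ does not by itself exclude $p$-torsion in $E_i(K)$, since $E_i(K)\otimes\mathbb{Q}_p/\mathbb{Z}_p$ kills torsion; the $p$-primary torsion over $K_\infty$ is instead controlled by Lemma 3.5.
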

     \begin{proof}
     	By Lemma 2.2 and Lemma 3.5, we just need to prove that $ E_i(K_\infty) $ is finite. Since $ p $ splits completely in $ K $, we can divide the proof into three cases. \\
     	
        Case 1. $ E_i $ has good ordinary reduction at all primes lying above $ p $.\\
        
        In this case, we have the formula [\cite{Greenberg1999}, Theorem 4.1]:
        \begin{equation*}
        	f_{E_i}(0) \sim\left(\prod_{v~\mathrm{bad}} c_{v}^{(p)}\right)\left(\prod_{v \mid p}\left|\tilde{E_{i}}_{,v}\left(\mathbb{F}_{v}\right)_{p}\right|^{2}\right)\left|\operatorname{Sel}(E_i/K)_{p}\right| /\left|E_i(K)_{p}\right|^{2}.
        \end{equation*}
     	Here $ a\sim b $ when $ a $ and $ b $ have the same $ p $-adic valuation, the highest power of $ p $ dividing  the Tamagawa factor $ c_v $ at $ v $ is $ c_{v}^{(p)} $, and $ \mathbb{F}_{v} $ is the residue field at $ v $. 
     	
     	As $ |E_i(\mathbb{Q})|=8 $, if $ p$  divides $|E_i(\mathbb{F}_{p})|$ , then $8p$ divides $|E_i(\mathbb{F}_{p})|$ and it contradicts the Hasse bound. Since $ p $ splits completely in $ K $, we have $ p \nmid |\tilde{E_{i}}_{,v}(\mathbb{F}_{v})_{p}| $. By Lemma 2.1 and Corollary 3.6, $ f_{E_i}(0) $ is a $ p $-adic unit and $ \lambda_{E_i}=0 $. Hence, $ E_i(K_\infty) $ is finite.\\
     	
     	Case 2.  $ E_i $ has multiplicative reduction at all primes lying above $ p $.\\
     	
     	In this case, we have the formula [\cite{Greenberg1999}, pp. 92-93]:
     	\begin{equation*}
     		f_{E_i}(0) \sim\left(\prod_{v \mid p} l_{v}\right)\left(\prod_{v~\mathrm{bad}} c_{v}^{(p)}\right)\left|\operatorname{Sel}(E_i/K)_{p}\right| /\left|E_i(K)_{p}\right|^{2}
     	\end{equation*}
        If $ E_i $ has non-split multiplicative reduction at $ v $, we can take $ l_v=2 $. If $ E_i $ has split multiplicative reduction at $ v $, we can take
        \begin{equation*}
        l_{v}=\frac{\log _{p}(N_{{K_{v}} / \mathbb{Q}_{p}}(q_{E_i}^{(v)}))}{\operatorname{ord}_{p}(N_{K_{v} / \mathbb{Q}_{p}}(q_{E_i}^{(v)}))} \cdot \frac{[K_{v} \cap \mathbb{Q}_{p}^{\mathrm{unr}}:\mathbb{Q}_{p}]}{2 p[K_{v} \cap \mathbb{Q}_{p}^{\mathrm{cyc}}:\mathbb{Q}_{p}]}	
       \end{equation*}	
       Here $ q_{E_i} $ is the Tate period, and $ \mathbb{Q}_{p}^{\mathrm{unr}} $ and $ \mathbb{Q}_{p}^{\mathrm{cyc}} $ are the maximal unramified extension and the cyclotomic $ \mathbb{Z}_p $-extension of $ \mathbb{Q}_{p} $, respectively. From [\cite{Thorne2019}, proof of Theorem 5], we know that $ \frac{\log _{p}(N_{{K_{v}} / \mathbb{Q}_{p}}(q_{E_i}^{(v)}))}{\operatorname{ord}_{p}(N_{K_{v} / \mathbb{Q}_{p}}(q_{E_i}^{(v)}))} $ lies in $ p\mathbb{Z}_p^{\times} $ since $ p $ splits completely in $ K $. By Lemma 2.1 and Corollary 3.6 and the formula, $ E_i(K_\infty) $ is finite.\\
       
       Case 3. $ E_i $ has good supersingular reduction at all primes lying above $ p $.
       
       In this case, our assertion follows from [\cite{Iovita2006}, Theorem 5.1] as $a_p =0$ under our assumptions.
     \end{proof}
 
     Theorem 1.1 follows from Corollary 3.2 and Proposition 3.7.\\
     
	    At last, we give some data from \cite{LMFDBCollaboration2022}.\\
	    
     \begin{center}
     	
     	\begin{tabular}{|rrrr|}
     		\hline
     		Base field & Mordell-Weil group & Tamagawa product & Analytic order of {\fontencoding{OT2}\selectfont X} \\
     		\hline
     		$ \mathbb{Q}(\sqrt{2}) $ & $ E_i: \mathbb{Z}/{2}\mathbb{Z} \times \mathbb{Z}/{4}\mathbb{Z} $ & $ E_1: 16,~ E_2: 4 $ & $E_i: 1 $ \\
     		$ \mathbb{Q}(\sqrt{6}) $ & $ E_i: \mathbb{Z}/{2}\mathbb{Z} \times \mathbb{Z}/{4}\mathbb{Z} $ & $ E_1: 16,~ E_2: 4 $ & $E_i: 1 $ \\
     		$ \mathbb{Q}(\sqrt{17}) $ & $ E_i: \mathbb{Z}/{2}\mathbb{Z} \times \mathbb{Z}/{4}\mathbb{Z} $ & $ E_1: 16,~ E_2: 4 $ & $E_i: 1 $ \\
     		$ \mathbb{Q}(\sqrt{21}) $ & $ E_i: \mathbb{Z}/{2}\mathbb{Z} \times \mathbb{Z}/{4}\mathbb{Z} $ & $ E_1: 32,~ E_2: 8 $ & $E_i: 1 $ \\
     		$ \mathbb{Q}(\sqrt{53}) $ & $ E_i: \mathbb{Z}/{2}\mathbb{Z} \times \mathbb{Z}/{4}\mathbb{Z} $ & $ E_1: 16,~ E_2: 4 $ & $E_i: 1 $ \\
     		$ \mathbb{Q}(\sqrt{61}) $ & $ E_i: \mathbb{Z}/{2}\mathbb{Z} \times \mathbb{Z}/{4}\mathbb{Z} $ & $ E_1: 64,~ E_2: 16 $ & $E_i: 1 $ \\
     		$ \mathbb{Q}(\sqrt{65}) $ & $ E_i: \mathbb{Z}/{2}\mathbb{Z} \times \mathbb{Z}/{4}\mathbb{Z} $ & $ E_1: 32,~ E_2: 8 $ & $E_i: 1 $ \\
     		$ \mathbb{Q}(\sqrt{69}) $ & $ E_i: \mathbb{Z}/{2}\mathbb{Z} \times \mathbb{Z}/{4}\mathbb{Z} $ & $ E_1: 32,~ E_2: 8 $ & $E_i: 1 $ \\
     		$ \mathbb{Q}(\sqrt{77}) $ & $ E_i: \mathbb{Z}/{2}\mathbb{Z} \times \mathbb{Z}/{4}\mathbb{Z} $ & $ E_1: 16,~ E_2: 4 $ & $E_i: 4 $ \\
     		\hline	
     	\end{tabular}

     \end{center}
    \begin{center}
    	\textbf{Table 2}. $ E_i $ in some real quadratic fields.
    \end{center}
     ~\\
     
	  From table 2, we can find many real quadratic fields and infinitely many odd primes satisfying our assumptions.
	  
	~\\
	
	\begin{ack*}
		The author would like to express appreciation to his supervisor Professor Takeshi Saito for his comments and suggestions. Also, the author would like to thank Professor Jack Thorne and Dr. Sho Yoshikawa for some helpful communications.
	\end{ack*}

	\newpage
	\bibliography{reference.bib}
	\bibliographystyle{alpha}
	~\\
	
	Graduate School of Mathematical Sciences, University of Tokyo, Komaba, Meguro, Tokyo 153-8914, Japan\\
	
	\textit{Email address}: zhangxy96@g.ecc.u-tokyo.ac.jp
	
\end{document}